\renewcommand{\iff}{if and only if }
\newcommand{\st}{such that }
\DeclareMathOperator{\Hom}{Hom}
\DeclareMathOperator{\End}{End}
\DeclareMathOperator{\Img}{Im}
\DeclareMathOperator{\pd}{dim}
\newcommand{\N}{\mathbb{N}}
\theoremstyle{plain}
\newtheorem{thm}{Theorem}[section]
\newtheorem{prop}[thm]{Proposition}
\newtheorem{lem}[thm]{Lemma}
\newtheorem*{conj}{Conjecture A}
\newtheorem{cor}[thm]{Corollary}
\theoremstyle{definition}
\theoremstyle{remark}
\newtheorem*{rem}{Remark}
\begin{document}
\title[Pure semisimplicity conjecture and Artin problem]{Pure semisimplicity conjecture and Artin problem for dimension sequences}

\author{\textsc{Jan \v Saroch}}
\address{Charles University, Faculty of Mathematics and Physics, Department of Algebra \\ 
Sokolovsk\'{a} 83, 186 75 Praha~8, Czech Republic}
\email{saroch@karlin.mff.cuni.cz}

\keywords{pure semisimple ring, tight embedding, division ring extension}

\thanks{Research supported by GA\v CR 17-23112S}

\subjclass[2010]{12E15 (primary), 16G60, 16S85, 16B70 (secondary)}
\date{\today}

\begin{abstract} Inspired by a recent paper due to Jos\'e Luis Garc\'ia, we revisit the attempt of Daniel Simson to construct a counterexample to the pure semisimplicity conjecture. Using compactness, we show that the existence of such counterexample would readily follow from the very existence of certain (countable set of) hereditary artinian rings of finite representation type.

The existence of such rings is then proved to be equivalent to the existence of special types of embeddings, which we call \emph{tight}, of division rings into simple artinian rings. Using the tools by Aidan Schofield from 1980s, we can show that such an embedding $F\hookrightarrow M_n(G)$ exists provided that $n<5$. As a byproduct, we obtain a division ring extension $G\subseteq F$ such that the bimodule ${}_GF_F$ has the right dimension sequence $(1,2,2,2,1,4)$.

Finally, we formulate Conjecture~A, which asserts that a particular type of adjunction of an element to a division ring can be made, and demonstrate that its validity would be sufficient to prove the existence of tight embeddings in general, and hence to disprove the pure semisimplicity conjecture.
\end{abstract} 

\maketitle
\vspace{4ex}


A ring $R$ is called \emph{right pure semisimple} if every right $R$-module is a direct sum of indecomposable finitely generated right $R$-modules. The assertion that, over a~right pure semisimple ring $R$, there is only finitely many isomorphism classes of indecomposable right $R$-modules, i.e.\ that $R$ is of finite representation type, is known as \emph{the pure semisimplicity conjecture (pssc)}. Auslander proved the conjecture for Artin algebras in \cite{A}. He also showed that a right pure semisimple ring $R$ is of finite representation type if and only if it is left pure semisimple. From the works \cite{Si1}, \cite{Si2}, \cite{H} of Simson and Herzog from 1980s and 1990s, we know that, to prove the conjecture in its full generality, it suffices to prove it for countable hereditary rings, and also that pssc holds true for all rings with polynomial identity. This has been so far the best general result in the positive direction even though many interesting papers on the topic appeared in the meantime; notably by D\~ung--Garc\'ia and Angeleri H\"ugel (\cite{DG1}, \cite{DG2}, \cite{DG3}, \cite{An}, \cite{AH}).

Daniel Simson also made considerable effort in the direction towards disproving the conjecture. He proposed a plausible way how to construct many counterexamples to the pssc, cf.\ \cite{Si}. The problem is that to construct these counterexamples one has to construct division ring extensions with certain special properties, and so far no one knows how to do it. Generalizing Schofield's tools from \cite[Chapter~13]{S0} seemed very promising for some time, however, to the best of author's knowledge, there has been virtually no further development in this direction.

Recently published inconspicuous papers \cite{G2} and \cite{G} by Garc\'ia opened a way to revisit Simson's approach and hopefully also draw an otherwise declining attention of the algebraic community back to this fascinating long-standing open problem.

In this article, we do not aspire to give a thorough overview of the tools and machinery that has been developed around the pssc in the last almost five decades. The author does not even consider himself an expert in the field, and so his endeavour cannot be anything but modest: he would like to give a little nudge to the topic he finds captivating, and perhaps convince other mathematicians that it is worth a shot.

\smallskip

The paper is organized as follows. In the first section, we capitalize on Garc\'ia's characterization (found in \cite[Section~6]{G}) of the squared small counterexample to the pssc suggested by Simson, and show that its existence follows, by the compactness of the first-order logic, from the mere existence of a particular countable set of hereditary artinian rings of finite representation type; see Theorem~\ref{t:finsat}. In Section~\ref{sec:Equiv}, we prove that there is a correspondence between these hereditary artinian rings of finite representation type and special types of embeddings of division rings into simple artinian rings which we call \emph{tight}.

In Section~\ref{sec:conjA}, we formulate Conjecture~A, which asserts that a particular type of adjunction of an element to a division ring can be made, and demonstrate that its validity would be sufficient to prove the existence of tight embeddings in general, and hence to disprove the pure semisimplicity conjecture. We also show that some cases of Conjecture~A hold true, and as a consequence, we provide an example of a division ring extension $G\subseteq F$ where ${}_GF_F$ has the right dimension sequence $(1,2,2,2,1,4)$; see Section~\ref{sec:pssc} for unexplained terminology.

The last section is devoted to showing that, if Conjecture~A holds true, then the existentially closed models of certain first-order axiomatization of $1$-tight embeddings are actually tight.

\section{Preliminaries, squared small counterexample and some logic}
\label{sec:pssc}

In the whole paper, a \emph{ring} means an associative unital ring. If $M$ is a $(G,F)$-bimodule where $G, F$ are division rings, we denote by $M^r$ the right dual of $M$, i.e. the $(F,G)$-bimodule $\Hom_F(M,F)$. The \emph{right dimension sequence} of $M$ is defined as the sequence $(\pd M_F, \pd (M^r)_G, \pd (M^{rr})_F,\dots)$. In case of a periodic right dimension sequence, we often state just a finite period. If the length of this period is not clear, we also explicitly specify this length.


If $G\subseteq F$ is a division ring extension, we denote by $R_F$ the triangular matrix ring $\begin{pmatrix} G & F \\ 0 & F \end{pmatrix}$. Following \cite[Section~6]{G}, the ring $R_F$ is a \emph{squared small counterexample} to the pssc if the right dimension sequence of the $(G,F)$-bimodule $F$ is $(1,2,2,2,\dots)$ and $\pd {}_GF = \infty$. Note that $R_F$ is really a counterexample to the pssc: it is right pure semisimple and not of finite representation type. We refer to \cite{Si} for exposition and more details.

We follow the right-handed version of the notation from \cite[Section~6]{G} which we first fit into logical context. Let $\mathcal L$ be the first-order language of (unital) rings extended by the unary relation symbol $G$ and countably many constant symbols $f_0,f_1,f_2,\dots$. As usual, in what follows, we often write $(\exists x\in G)\,\varphi(x,\bar y)$ instead of $(\exists x)\,G(x)\,\&\,\varphi(x,\bar y)$, and $(\forall x\in G)\,\varphi(x,\bar y)$ instead of $(\forall x)\,G(x) \rightarrow \varphi(x,\bar y)$.
For each positive integer $k$, let $D_k(x)$, $S_k(x)$ respectively, denote the formula $$\bigwedge _{j = 0}^{k-1} (\exists a_0,\dots, a_k\in G)\ f_jx = \sum_{i = 0}^k a_if_i,$$

$$\bigwedge _{j = 0}^{k} (\exists a_0,\dots, a_k\in G)\ f_jx = \sum_{i = 0}^k a_if_i\hbox{ respectively.}$$

\medskip

Let $T$ be the $\mathcal L$-theory with the following set of axioms:
\begin{enumerate}
	\item $1 = f_0$;
	\item the axioms of a division ring;
	\item the axioms saying that $G$ is a division subring;
	\item $(\forall x)(\exists a,b\in G)\ x = a + f_1b$;
	\item for each $k\in\mathbb N$ the axiom $(\forall a_0,\dots, a_k\in G)\, \sum_{i=0}^{k} a_if_i = 0 \rightarrow \bigwedge _{i =0}^k a_i = 0$;
	\item for each $k\in\mathbb N$ the axiom $$(\exists b)(D_k(b)\;\&\; (\forall x)(D_k(x) \rightarrow (\exists a_0,a_1)(x= a_0+ba_1 \;\&\; S_k(a_0)\;\&\; S_k(a_1)))).$$
\end{enumerate}

Let us explain the axioms assuming for a moment that the $\mathcal L$-theory $T$ is satisfiable. In a model $F$ of the theory $T$, let $G = S_0 = \{f\in F\mid G(f)\mbox{ holds in }F\}$ be the division subring defined by the formula $G(x)$. Further, for $k\in\mathbb N_0$, let $L_k$ be the left $G$-subspace of $F$ spanned by $\{f_0,f_1,\dots,f_k\}$. Also for each $k\in\N$, we denote by $S_k = \{f\in F \mid L_k f \subseteq L_k\}$ the subring of $F$ defined by the formula $S_k(x)$. The multiplication by an arbitrary nonzero $f\in S_k$ from the right provides an endomorphism of the left $G$-module $L_k$. Since $F$ is a division ring, this endomorphism has got trivial kernel, whence $L_k f = L_k$, and it immediately follows that $S_k$ is, in fact, a division subring of $F$. Similarly, $D_k = \{f\in F \mid L_{k-1}f \subseteq L_k\}$ forms the $(S_{k-1},S_k)$-bimodule defined by the formula $D_k(x)$.

The axioms in $T$ state that $G\subseteq F$ is a division ring extension (the axioms $(2)$ and $(3)$) such that the left $G$-dimension of $F$ is $\infty$ (since $\{f_i \mid i\in\N_0\}$ is a left independent subset of $F$ over~$G$ by $(5)$), the right $G$-dimension of $F$ is $2$ (by $(4)$) and $\pd {D_k}_{S_k} \leq 2$ for all $k\in\N$ (by the axiom $(6)$).

\begin{lem}\label{l:regular} Let $k\in \N$ and $F$ be a model of $T$. Then for each $f\in F$ there exist $a_k\in L_{k-1}$ and $b_k\in S_{k-1}$ such that $f = a_k + f_kb_k$. Moreover, dim ${D_k}_{S_k} = 2$.
\end{lem}

\begin{proof} By induction for $k$. If $k = 1$, we have $S_{k-1} = G = L_{k-1}$ and the result follows from the axiom $(4)$.

Let $k\geq 1$ and assume that the claim holds for all $1\leq i\leq k$. From the induction hypothesis, we have $f_{k+1} = a_k + f_k b_k$ for some $a_k\in L_{k-1}$ and $b_k\in S_{k-1}$. Hence $b_k\in D_k\setminus S_k$ (due to axiom $(5)$), and so dim ${D_k}_{S_k} = 2$ using the axiom $(6)$.

Now let $f\in F$ be arbitrary. From the induction hypothesis, we have $a\in L_{k-1}$ and $b\in S_{k-1}$ such that $f = a + f_k b$. Since $b\in D_k$, we get $b = s + b_k t$ for a (unique) pair $(s,t)\in S_k^2$. Putting everything together, we have
$$f = a+f_k(s+b_kt) = a + f_ks+ (f_{k+1} -a_k)t = a+f_ks-a_kt+f_{k+1}t$$ and it remains to put $a_{k+1} = a+f_ks-a_kt$ and $b_{k+1} = t$.
\end{proof}

Recall from \cite{G} that an extension of division rings $G\subseteq F$ is called \emph{right regular} if for any two left $G$-subspaces $H_1, H_2$ of $F$ with finite and equal dimensions there exists $a\in F$ \st $H_1a = H_2$. The following proposition is basically \cite[Proposition~6.3]{G} with an additionally shown redundace of the assumption of right regularity.

\begin{prop}\label{p:models} $F\models T$ \iff $R_F$ is a squared small counterexample to the pure semisimplicity conjecture.
\end{prop}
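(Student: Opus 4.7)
The statement is essentially~\cite[Proposition~6.3]{G}; what is new is the assertion that right regularity of $G\subseteq F$---which Garc\'ia assumes as an additional hypothesis---is already forced by the axioms~(1)--(6) of~$T$. My plan is to split the equivalence into its two implications and, in each, to reduce the bulk of the work to Garc\'ia's proposition, with a separate argument handling right regularity in the $(\Rightarrow)$ direction.

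$(\Leftarrow)$ Suppose $R_F$ is a squared small counterexample. Set $f_0:=1$, and pick $f_1\in F\setminus G$ using $\dim F_G=2$, which realises axiom~(4). Proceed inductively: once $f_0,\ldots,f_k$ have been constructed, the Schofield-style biduality for bimodules over division rings identifies the $(k+1)$-st term of the right dimension sequence of ${}_GF_F$ with $\dim(D_k)_{S_k}$; as this equals $2$, one finds $b\in D_k\setminus S_k$ with $D_k=S_k+bS_k$, and then $f_{k+1}:=f_kb$ lies outside $L_k$ because $b\notin S_k$. This choice preserves left $G$-independence, makes axiom~(6) at level $k$ hold with witness $b$, and the hypothesis $\dim{}_GF=\infty$ guarantees that the induction never terminates.

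$(\Rightarrow)$ Suppose $F\models T$. Lemma~\ref{l:regular} gives $F=\bigcup_k S_k$ together with the tower $G=S_0\subseteq S_1\subseteq\cdots$ of division subrings and $\dim(D_k)_{S_k}=2$ for every $k\in\N$; moreover, $\dim{}_GF=\infty$ follows at once from axiom~(5). The right dimension sequence of ${}_GF_F$ begins with $\dim F_F=1$ and $\dim(F^r)_G=\dim F_G=2$ (from axiom~(4)); inverting the identification used above, every further term equals $\dim(D_k)_{S_k}=2$, so the sequence is exactly $(1,2,2,2,\ldots)$. The only remaining ingredient is right regularity. Given two left $G$-subspaces $H_1,H_2\subseteq F$ of the same finite dimension $n$, choose $k$ with $H_1,H_2\subseteq L_k$; the homogeneity of the flag $L_0\subset L_1\subset\cdots$ under the action of the $S_j^*$'s---provided inductively by axiom~(6) in the spirit of Lemma~\ref{l:regular}---produces $s_i\in F^*$ sending $H_i$ into $L_{n-1}$, and $a:=s_1s_2^{-1}$ then carries $H_1$ onto $H_2$.

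\textbf{Main obstacle.} The central technical point is the identification of the $(k+1)$-st term of the right dimension sequence of ${}_GF_F$ with $\dim(D_k)_{S_k}$: this is what converts the syntactic content of the axioms of $T$ into the semantic content of the bimodule, and drives both implications. A secondary obstacle is verifying right regularity in the $(\Rightarrow)$ direction; although the rigidity of the flag makes the existence of the transporters $s_1,s_2$ plausible, pinning them down calls for a careful inductive use of axiom~(6)---this is where the ``additional shown redundance'' advertised by the proposition actually lies.
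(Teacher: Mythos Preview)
Your $(\Leftarrow)$ direction is fine and essentially matches the paper, which simply cites \cite[Lemma~5.2 and Proposition~6.3]{G} for that implication.

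The $(\Rightarrow)$ direction, however, has a genuine gap in the right-regularity argument. You write ``choose $k$ with $H_1,H_2\subseteq L_k$'', but nothing in the axioms of $T$ guarantees that an arbitrary finite-dimensional left $G$-subspace of $F$ sits inside some $L_k$: the $f_i$ are left $G$-independent by axiom~(5), but they are \emph{not} assumed to span ${}_GF$. Relatedly, your assertion that Lemma~\ref{l:regular} yields $F=\bigcup_k S_k$ is not supported by that lemma (it only gives the decomposition $f=a_k+f_kb_k$ with $a_k\in L_{k-1}$, $b_k\in S_{k-1}$, which says nothing about exhausting $F$). Even granting $H_i\subseteq L_k$, the appeal to ``homogeneity of the flag under the action of the $S_j^*$'s'' is too vague: the $S_j$ stabilise the $L_j$, but that does not by itself produce an element carrying an arbitrary $n$-dimensional subspace of $L_k$ onto $L_{n-1}$.

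The paper's argument is different and avoids these issues entirely. It proves directly, by induction on $k$, that \emph{every} $(k{+}1)$-dimensional left $G$-subspace $H$ of $F$ has the form $L_k a$ for some $a\in F$. The inductive step writes $H=L_{k-1}+Gf$ (using the hypothesis to move a hyperplane of $H$ to $L_{k-1}$), then applies Lemma~\ref{l:regular} to get $f=a_k+f_kb_k$ with $a_k\in L_{k-1}$ and $0\neq b_k\in S_{k-1}$; since $b_k\in S_{k-1}$ gives $L_{k-1}b_k=L_{k-1}$, one reads off $L_kb_k=L_{k-1}+Gf_kb_k=H$. This never needs $H$ to lie in any $L_k$, which is precisely where your approach breaks down.
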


\begin{proof} If $R_F$ is a squared small counterexample to the pssc then $F\models T$ (for a~suitable choice of $1 = f_0, f_1,\dots$) by \cite[Lemma~5.2 and Proposition 6.3]{G}. Conversely assume that $F\models T$. From the axioms, we get dim ${}_G F = \infty$, dim $F_G = 2$ and dim ${D_k}_{S_k} \leq 2$. In fact, by Lemma~\ref{l:regular}, dim ${D_k}_{S_k} = 2$. We are going to prove that the extension $G\subseteq F$ is right regular.

It is enough to show, by induction on $k\in\N_0$, that, for a left $G$-subspace $H$ of $F$ with dimension $k+1$, there exists $a\in F$ such that $L_k a = H$. This is trivial for $k = 0$, so let $k>0$. Using the induction hypothesis, we can assume that $H = L_{k-1} + Gf$ for $f\not\in L_{k-1}$. From Lemma~\ref{l:regular}, we have $f = a_k + f_k b_k$ for $a_k\in L_{k-1}$ and $b_k \neq 0$, $b_k\in S_{k-1}$. It follows that $H = L_{k-1} + Gf_k b_k$ and $L_k b_k = H$.

The other implication in \cite[Proposition 6.3]{G} concludes the proof.
\end{proof}

For what follows, it is convenient to record a particular case of results in \cite[Theorems~2.8 and 3.9]{Si}.

\begin{lem} \label{l:finrep} Let $G\subseteq F$ be a division ring extension. Assume that the right dimension sequence of the $(G,F)$-bimodule $F$ begins with \[d^{\vee} = (1,\underbrace{2,\dots, 2}_{(n-1)\times}, 1)\mbox{ where }n\in\N.\] Then the ring $R_F$ is of finite representation type with exactly $n+2$ indecomposable modules up to isomorphism. Moreover, the right dimension sequence of~$F$ is (periodic with the period) $d = (1,\underbrace{2,\dots, 2}_{(n-1)\times}, 1,n)$ and $\pd {}_GF = n$.
\end{lem}

\begin{proof} Notice that $d^{\vee}$ is a simple restriction of the dimension-sequence $d$, cf.\ \cite[\S 2.3 and \S 3.2]{Si}. The rest follows from the left-right dual versions of \cite[Theorems~2.8 and 3.9]{Si} used for the $(F,G)$-bimodule $F\cong ({}_GF_F)^r$ and the fact that $\pd M_F = \pd {}_FM^r$ provided that $\pd M_F$ is finite (resp.\ with $F$ replaced by $G$).
\end{proof}

Recall that each finitely generated right $R_F$-module $M$ can be represented by an $F$-linear map $G^t\otimes_G F\to F^s$ where $t,s\in\mathbb N_0$. The pair $(t,s)$ is called \emph{the dimension vector of} $M$ and it is an important invariant of the module $M$.

In the lemma below, we explicate a result implicitly contained in \cite[Sections~5 and 6]{G}. Unlike there, we deal with $R_F$ of finite representation type here, however the reasoning is the same which is why we give just a sketch of the proof.

\begin{lem} \label{l:Garcia} Let $G\subseteq F$ be a division ring extension and $\pd {}_G F = n$ where $n\geq 2$. Let $\{f_0,\dots, f_{n-1}\}$ be a left basis of $F$ over $G$ where $f_0 = 1$. Assume that, for $1\leq k < n$, the $L_k$, $D_k$ and $S_k$ are defined as before. In addition, we put $D_0 = F$. The following conditions are equivalent:
\begin{enumerate}
	\item The $(G,F)$-bimodule $F$ has the right dimension sequence $(1,2,2,\dots, 2,1,n)$ of length $n+2$.
	\item $\pd {D_k}_{S_k} = 2$ whenever $0\leq k< n-1$.
\end{enumerate}
\end{lem}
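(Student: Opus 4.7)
The plan is to compute, step by step, the sequence of iterated right duals $M_0, M_1, M_2,\dots$ starting from $M_0 = {}_GF_F$, and to identify, for each $1 \le k \le n$, the right dimension of $M_k$ with $\pd (D_{k-1})_{S_{k-1}}$. Both (1) and (2) force $\pd F_G = 2$: condition~(2) supplies this at $k = 0$ via $S_0 = G$, $D_0 = F$; condition~(1) supplies it as the second entry of the sequence. Hence the first two entries, $\pd F_F = 1$ and $\pd F_G = 2$, are in place. At the other end, $L_{n-1} = F$ immediately yields $S_{n-1} = D_{n-1} = F$ and hence $\pd (D_{n-1})_{S_{n-1}} = 1$.

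The inductive identification of the $M_k$ is the heart of the argument. Each pass to the right dual sends a $(P, Q)$-bimodule of finite right $Q$-dimension to the $(Q, P)$-bimodule $\Hom_Q(-, Q)$, and in the present setup the resulting bimodule fits canonically as a structure built from the next piece of the chain $(L_k, D_k, S_k)$. Specifically, I would show that at each step $M_k$ is a bimodule whose right-side module structure is that of $D_{k-1}$ over $S_{k-1}$, with right dimension precisely $\pd (D_{k-1})_{S_{k-1}}$. At $k = n$ this drops to $1$ for structural reasons ($S_{n-1} = F$), and the final dual $M_{n+1}$ records the remaining invariant $\pd {}_GF = n$.

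Granted the identification, the equivalence is immediate: the right dimension sequence of ${}_GF_F$ reads $(1,\,2,\,\pd (D_1)_{S_1},\,\dots,\,\pd (D_{n-2})_{S_{n-2}},\,1,\,n)$, which coincides with $(1, 2, 2, \dots, 2, 1, n)$ iff $\pd (D_k)_{S_k} = 2$ for $1 \le k \le n-2$. The main obstacle is the inductive identification of the $M_k$, which I would carry out in the spirit of the reasoning sketched in \cite[Sections~5 and~6]{G} and the tools developed in \cite[Chapter~13]{S0}; adapting those arguments bypasses the need to re-derive the bimodule calculus from scratch, and justifies the paper's assertion that a sketch of the proof suffices.
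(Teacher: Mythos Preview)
Your plan identifies the correct target---that the $k$th entry of the right dimension sequence should match $\pd (D_{k-1})_{S_{k-1}}$---but you never establish this identification, and as written the argument is circular: the displayed formula
\[
(1,\,2,\,\pd (D_1)_{S_1},\,\dots,\,\pd (D_{n-2})_{S_{n-2}},\,1,\,n)
\]
is not an unconditional identity; it \emph{is} the content of the lemma. The iterated duals $M_k$ are $(G,F)$- or $(F,G)$-bimodules, so their right dimensions are over $F$ or $G$, not over $S_{k-1}$. Bridging that gap is exactly the work you wave toward but do not do. The reference to \cite[Chapter~13]{S0} is misplaced here; that material concerns simple artinian coproducts and is used in Section~\ref{sec:conjA}, not in this lemma.

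The paper's route is different and more concrete. It passes through the module category of $R_F$: for $(1)\Rightarrow(2)$ it lists the indecomposables $X_0,\dots,X_{n+1}$ by dimension vector (via \cite[Proposition~2.1]{G}), invokes the identifications $S_k\cong\End_{R_F}(X_k)$ and $D_k\cong\Hom_{R_F}(X_k,X_{k-1})$ from \cite[Proposition~5.1]{G}, and reads off $(2)$. For $(2)\Rightarrow(1)$ the essential step you omit entirely is to deduce \emph{right regularity} of $G\subseteq F$ from $(2)$ (the same induction as in Lemma~\ref{l:regular}/Proposition~\ref{p:models}); only then are indecomposables determined by their dimension vectors, and only then can one iterate the rigid tilting machinery of \cite[Lemma~2.7, Proposition~2.8]{G2} to produce the modules with vectors $(3,2),\dots,(n,n-1),(1,1)$ and recover the dimension sequence. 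Without right regularity your inductive identification of $M_k$ with $(D_{k-1},S_{k-1})$ data has no basis, and without naming the $R_F$-module intermediaries you have not actually linked the two sides.
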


\begin{proof}[Proof sketch] $(1)\Longrightarrow (2)$. According to \cite[Proposition 2.1]{G}, we can list all the indecomposable right $R_F$-modules up to isomorphism in a sequence $X_0, X_1,\dots, X_n$, $X_{n+1}$. We know that there is $n+2$ such modules by Lemma~\ref{l:finrep}. These modules have the respective dimension vectors $(1,0), (2,1),\dots, (n, n-1)$, $(1,1)$, $(0,1)$ by the left-right dual version of \cite[Proposition~2.11 and pg.\ 240]{G2}; notice that the occurrence of the vector $(1,1)$ stems from the second 1 in the sequence $(1,2,2,\dots, 2,1,n)$. In particular, for $1\leq k<n$, we have $S_k\cong \End_{R_F}(X_k)$ and $D_k\cong \Hom_{R_F}(X_k, X_{k-1})$ by \cite[Proposition~5.1]{G}. Now, $(2)$ follows by \cite[Proposition~2.1(vi)]{G}.

$(2)\Longrightarrow (1)$. As in Proposition~\ref{p:models}, we observe that the division ring extension is right regular; in particular, a finitely generated indecomposable right $R_F$-module is determined up to isomorphism by its dimension vector. The assumption on the left and right dimension of $F$ over $G$ implies that the right dimension sequence of the left dual $\Hom_G({}_GF,G)$ of ${}_GF_F$ begins with $n,1,2,\dots$. (We use $(2)$ for $\pd F_G = \pd {D_0}_{S_0} = 2$.) The modules $M_0, M_1$ with the respective dimension vectors $(1,0)$ and $(2,1)$ are the two indecomposable injective modules; moreover, $M_0\oplus M_1$ is a rigid tilting module according to \cite[Definition 2.4]{G2}. As before, we have $S_1\cong \End_{R_F}(M_1)$ and $D_1\cong \Hom_{R_F}(M_1, M_0)$ by \cite[Proposition~5.1]{G}.

Repeated use of \cite[Lemma 2.7 and Proposition 2.8]{G2} and our assumption $(2)$, gives us the indecomposable modules $M_2, M_3,\dots, M_{n-1},M_n$ with the respective dimension vectors $(3,2), (4,3), \dots, (n,n-1), (1,1)$. Notice that the vector $(1,1)$ occur since $D_{n-1} = S_{n-1} = F$. In turn, \cite[Lemma 2.7]{G2} together with Lemma~\ref{l:finrep} gives us the rest of the right dimension sequence of ${}_GF_F$.
\end{proof}

The final result of this section relates the existence of a squared small counterexample to an instance of Artin problem for dimension sequences, cf.\ \cite[Definition~2.3]{Si}. To the author's best knowledge, it has not been pointed out anywhere in the literature that a mere existence of particular (countable set of) hereditary artinian rings of finite representation type already disproves the pssc. One can certainly learn a lesson here: we should have the finite-representation-type building blocks all investigated and well understood before attempting to construct something fancy of infinite representation type.

\begin{thm}\label{t:finsat} There is a squared small counterexample to the pure semisimplicity conjecture provided that, for infinitely many integers $n\geq 2$, there exists a division ring extension $G\subseteq F$ such that the $(G,F)$-bimodule $F$ has the right dimension sequence $(1,2,2,\dots,2,2,1,n)$ of length $n+2$.
\end{thm}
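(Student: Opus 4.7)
The plan is to combine Proposition~\ref{p:models} with the compactness theorem of first-order logic. Since that proposition identifies models of the theory $T$ with squared small counterexamples to the pssc, it is enough to produce a model of $T$, and by compactness this reduces to showing that every finite fragment $T_0\subseteq T$ is satisfiable.

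Fix such $T_0$. Let $N$ be the largest $k$ for which an instance of axiom schema (5) or (6) occurs in $T_0$, and let $M$ be the largest index of a constant symbol $f_M$ occurring in $T_0$. The hypothesis provides infinitely many admissible $n$, so we may pick $n\geq\max\{N+2,M+1\}$ together with a division ring extension $G\subseteq F$ whose right dimension sequence is $(1,2,\dots,2,1,n)$ of length $n+2$. Fix a left $G$-basis $1=f_0,f_1,\dots,f_{n-1}$ of $F$, use it to interpret the first $n$ constants of $\mathcal L$, interpret the unary predicate $G$ as the subring $G$, and interpret any remaining constants arbitrarily.

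To verify $F\models T_0$, only axioms (4), (5) and (6) need genuine attention. Lemma~\ref{l:Garcia} applied to the chosen extension yields $\pd (D_k)_{S_k}=2$ for every $0\leq k<n-1$, hence in particular for every $k\leq N$. The $k=0$ instance reads $\pd F_G=2$; together with $f_1\not\in G$ (immediate from axiom (5) for $k=1$) this forces $\{1,f_1\}$ to be a right $G$-basis, delivering axiom (4). For $1\leq k\leq N$, since $L_{k-1}\subseteq L_k$ gives $1\in D_k$, one may complete $\{1\}$ to a right $S_k$-basis $\{1,b\}$ of $D_k$, and the element $b$ then witnesses axiom (6). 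Axiom (5) for $k\leq N$ is immediate because $f_0,\dots,f_N$ are part of a left $G$-basis. Hence $T_0$ is satisfied, compactness provides a model of $T$, and Proposition~\ref{p:models} delivers a squared small counterexample.

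The argument is essentially routine once Lemma~\ref{l:Garcia} is invoked: that lemma is precisely what converts the \emph{global} dimension-sequence hypothesis into the \emph{pointwise} conditions $\pd (D_k)_{S_k}=2$ matched by the axioms of $T$. Thus the substantive step is already carried out in Lemma~\ref{l:Garcia}, and the only thing to watch during the compactness bookkeeping is to choose $n$ large enough that all the $k$'s appearing in $T_0$ lie strictly below $n-1$; no further obstacle arises.
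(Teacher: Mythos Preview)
Your proof is correct and follows essentially the same route as the paper: reduce via Proposition~\ref{p:models} to the satisfiability of $T$, invoke compactness, and for a finite fragment choose $n$ large enough so that Lemma~\ref{l:Garcia} supplies $\pd(D_k)_{S_k}=2$ for all relevant $k$. You simply spell out the verification of axioms (4)--(6) in more detail than the paper does, which is harmless (and arguably clearer).
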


\begin{proof} We use Proposition~\ref{p:models}. By the compactness theorem from the first-order logic, the theory $T$ is satisfiable \iff $T$ is finitely satisfiable. However, this easily follows from our assumption: for each finite $\Delta\subseteq T$, there is an integer $n$ large enough such that the axioms with $k\geq n$ from schemata $(5)$ and $(6)$ do not occur in $\Delta$, and a division ring extension $G\subseteq F$ such that the $(G,F)$-bimodule $F$ has the right dimension sequence $(1,2,2,\dots, 2,1,n)$ of length $n+2$ exists. In particular, dim $F_G = 2$, dim ${}_G F = n$ and dim ${D_k}_{S_k} \leq 2$ for each $1\leq k<n$ by Lemma~\ref{l:Garcia} (notice that $F = S_{n-1} = D_{n-1}$ trivially holds). Consequently, $F\models \Delta$.
\end{proof}

If $G\subseteq F$ is a division ring extension where ${}_GF_F$ has the right dimension sequence $(1,2,2,\dots,2,2,1,n)$ of length $n+2$, then the hereditary artinian ring $R_F$ is of (finite representation) Coxeter type $I_2(n+2)$. Unfortunately, there is no known construction of such hereditary artinian rings, all the more so such division ring extensions, for $n\geq 5$. The case $n = 3$ is covered in \cite[Section~3]{S} (and also \cite[Chapter~13]{S0}), and we deal with the case $n = 4$ in Section~\ref{sec:conjA}.

\section{Tight embeddings}
\label{sec:Equiv}

Let $F, G$ be division rings and $\varphi:F \to M_n(G)$ a ring embedding where $n\in\N$. Given positive integers $a,b$ such that $a+b = n+1$, we say that $\varphi$ is \emph{$a$-tight} if the following holds: for an arbitrary choice of an $a\times b$ matrix $g$ with elements from $G$, there exists a (necessarily unique\footnote{If there were two such distinct $f,f^\prime\in F$, then $\varphi(f-f^\prime)$ would not be a regular matrix (having the upper right $a\times b$ block full of zeros), however $\varphi((f-f^\prime)^{-1})$ would have to be its inverse.}) $f\in F$ such that $\varphi(f)$ has $g$ as its upper right corner block. If $\varphi$ is $a$-tight for each $a\in\{1,2,\dots, n\}$, we call it \emph{tight}.

It is easy to see that, in the setting $n = 1$, $\varphi$ is tight if and only if it is an isomorphism of division rings. The author was unable to find any reference in the literature concerning the existence of these embeddings for $n>1$. However, as we shall see, this question is tightly related to the existence of the desired division ring extension $G\subseteq F$.

\begin{thm} \label{t:tight} Let $G, F$ be division rings and $n\in\N, n > 1$. The following conditions are equivalent:
\begin{enumerate}
	\item There exists a ring extension $G\subseteq F$ such that the right dimension sequence of the $(G,F)$-bimodule $F$ has length $n+2$ and equals $(1,2,2,\dots, 2,1,n)$.
	\item There exists a tight embedding of $F$ into $M_n(G)$.
\end{enumerate}
\end{thm}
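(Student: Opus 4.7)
My plan is to establish the equivalence by producing $\varphi$ from the data of (1) as the standard right-multiplication representation on a left $G$-basis, and conversely recovering a copy of $G$ inside $F$ from the data of (2). In both directions Lemma~\ref{l:Garcia} reformulates (1) as the pointwise equalities $\pd (D_k)_{S_k} = 2$ for $0 \leq k \leq n-2$.

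For (1) $\Rightarrow$ (2), I fix a left $G$-basis $f_0 = 1, f_1, \ldots, f_{n-1}$ of $F$, set $L_k = \sum_{i=0}^{k} G f_i$ (with $L_{-1} = 0$), and define $\varphi : F \to M_n(G)$ by letting $\varphi(f)_{ij}$ be the coefficient of $f_j$ in the expansion of $f_i f$ in the basis. This is a unital ring embedding, representing $F$ as the ring of left-$G$-linear right-multiplication maps on ${}_G F$. The upper right $a\times b$ block of $\varphi(f)$ (with $a+b=n+1$) encodes exactly the left $G$-linear map $L_{a-1} \to F/L_{a-2}$, $x \mapsto xf + L_{a-2}$, so $a$-tightness translates to the bijectivity of the additive map $\pi_a : F \to \Hom_G(L_{a-1}, F/L_{a-2})$, $f \mapsto [x \mapsto xf + L_{a-2}]$. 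Injectivity is immediate: $L_{a-1} f \subseteq L_{a-2}$ would make right multiplication by $f$ a left $G$-linear injection of an $a$-dimensional left $G$-module into an $(a-1)$-dimensional one, forcing $f = 0$. For surjectivity I argue by induction on $a$: the case $a = 1$ is trivial, since the first row of $\varphi(f)$ records the coordinates of $f$ in the basis. In the inductive step I use the hypothesis $\pd (D_{a-1})_{S_{a-1}} = 2$ together with the right regularity of $G \subseteq F$ (which follows, as in Proposition~\ref{p:models}, in the finite-dimensional setting) to extend a partial solution on $L_{a-2}$ to all of $L_{a-1}$ by adjusting it by a term $f_{a-1} s$ with $s \in S_{a-2}$. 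The boundary case $a = n$ is handled analogously by right regularity.

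For (2) $\Rightarrow$ (1), the $1$-tight property produces a canonical ring embedding $\iota : G \hookrightarrow F$: for each $g \in G$, let $\iota(g)$ be the unique element of $F$ whose $\varphi$-image has first row $(g, 0, \ldots, 0)$. Multiplicativity of $\iota$ follows by inspecting the first row of $\varphi(\iota(g))\varphi(\iota(g'))$. Picking $f_0 = 1, f_1, \ldots, f_{n-1} \in F$ whose $\varphi$-images have first rows $e_0, \ldots, e_{n-1}$ gives a left $\iota(G)$-basis of $F$, so $\pd {}_G F = n$. Reading off row $i$ of $\varphi(f)$ as the first row of $\varphi(f_i f) = \varphi(f_i)\varphi(f)$ shows that $\varphi$ is exactly the right-multiplication representation in this basis. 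Consequently $a$-tightness for $2 \leq a \leq n-1$ translates into $\pd (D_{a-1})_{S_{a-1}} = 2$, and Lemma~\ref{l:Garcia} yields the right dimension sequence of (1).

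The main obstacle is the surjectivity in the inductive step of (1) $\Rightarrow$ (2): since $\pi_a$ is additive but not left $G$-linear, bijectivity cannot be reduced to a naive dimension count, and must instead be extracted from the explicit decomposition $D_{a-1} = S_{a-1} \oplus b_{a-1} S_{a-1}$ afforded by $\pd (D_{a-1})_{S_{a-1}} = 2$, in the spirit of Lemma~\ref{l:regular}.
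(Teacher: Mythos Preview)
Your plan follows the paper's proof closely: define $\varphi$ as the right-regular representation on a left $G$-basis, identify $a$-tightness with bijectivity of your map $\pi_a$, and pass back and forth via Lemma~\ref{l:Garcia}. The architecture is correct, but there is a persistent off-by-one error in how you link tightness to the $D_k,S_k$ data, and a misplaced appeal to right regularity.

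For the inductive step in $(1)\Rightarrow(2)$: passing from $(a-1)$-tightness to $a$-tightness requires adjusting the $a$-th row of the block without disturbing rows $1,\dots,a-1$ modulo $L_{a-2}$. The adjustment is by an element $s\in S_{a-2}$ itself (added to the candidate $f$), not by ``$f_{a-1}s$'': the condition $L_{a-2}s\subseteq L_{a-2}$ preserves the first $a-1$ rows, and the crux is to show that $f_{a-1}s\bmod L_{a-2}$ can be made arbitrary as $s$ ranges over $S_{a-2}$. This is extracted from $\dim(D_{a-2})_{S_{a-2}}=2$, not $\dim(D_{a-1})_{S_{a-1}}=2$ as you wrote, by choosing via $(a-1)$-tightness a specific $e\in D_{a-2}\setminus S_{a-2}$ whose row $a-1$ ends with $0,1,0,\dots,0$ and computing that row $a-1$ of $es$ reads off row $a$ of $s$. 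Right regularity is not used here, and there is no separate boundary case $a=n$; the induction runs uniformly.

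The same index shift recurs in $(2)\Rightarrow(1)$: the paper deduces $\dim(D_k)_{S_k}=2$ from $(k+2)$-tightness, by choosing $s\in S_k$ with prescribed entries in row $k+2$. With your formula $k=a-1$ and range $2\le a\le n-1$ you would miss $k=0$, i.e.\ the essential fact $\dim F_G=2$. The correct bookkeeping is $a\in\{2,\dots,n\}$ yielding $k=a-2\in\{0,\dots,n-2\}$.
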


\begin{proof} $(1)\Longrightarrow (2)$. By our assumption and Lemma~\ref{l:finrep}, $\pd {}_G F = n$. Let $B = \{f_0,\dots, f_{n-1}\}$ be a~left basis of $F$ over $G$ where $f_0 = 1$. Using this basis, we can define the embedding of $F$ into $M_n(G)$ via the action of $F$ on ${}_G F$ from the right. In this embedding, each element $a\in F$ corresponds to the matrix $a = (a_{ij})\in M_n(G)$ such that $f_{i-1}a = \sum _j a_{ij}f_{j-1}$ for all $1\leq i,j\leq n$. As in Section~\ref{sec:pssc}, we use, for each $0\leq k< n$, the notation $L_k$ for the left $G$-subspace of $F$ spanned by $\{f_0,\dots, f_k\}$. Further, we set $S_k = \{a\in F \mid L_k a \subseteq L_k\}$, $D_0 = F$, and $D_k = \{a\in F \mid L_{k-1} a \subseteq L_k\}$ for $k\geq 1$. Notice that $S_k$ is a division subring of $F$ whilst $D_k$ is a right $S_k$-module for each $0\leq k<n$. We also have the trivial equalities $D_{n-1} = S_{n-1} = L_{n-1} = F$ and $S_0 = L_0 = G$.

Notice how the matrices in $D_k$ and $S_k$ look like: the subring $S_k$ comprises of all matrices in $F$ having the $(k+1) \times (n-k-1)$ upper right block full of zeros. Similarly, the right $S_k$-module $D_k$ comprises of all matrices in $F$ having the $k \times (n-k-1)$ upper right block full of zeros.

We know from Lemma~\ref{l:Garcia} that our assumption $(1)$ translates to $\pd {D_k}_{S_k} = 2$ for each $0\leq k< n-1$. We will prove that the embedding $F\subseteq M_n(G)$ defined in the previous two paragraphs is tight. This is done by induction on $a$, where $a,b$ are positive integers satisfying $a+b = n+1$. The case $a = 1$ is trivial by the definition of the embedding $F\subseteq M_n(G)$: indeed, the first row of a matrix from $F$ can be arbitrary.

Let us assume that $a\geq 1$ and the claim holds for $a$. To prove it for $a+1$, we are going to use that $\pd {D_{a-1}}_{S_{a-1}} = 2$. Fix the basis $\{1,e\}$ of $D_{a-1}$ over $S_{a-1}$ where $e$ is the unique matrix in $F$ whose first $a-1$ rows end with $n-a+1 = b$ zeros and whose last $b$ entries in the $a$th row are $0,1,0,\dots, 0$. Such an $e$ exists by the inductive hypothesis; using the description of matrices in $D_{a-1}$ and $S_{a-1}$, we see that $e\in D_{a-1}\setminus S_{a-1}$.

If $s\in S_{a-1}$ is arbitrary, then the last $b-1$ entries in the $a$th row of the matrix $es$ coincide with the last $b-1$ entries in the $a+1$th row of $s$. By the inductive hypothesis, the last $b-1$ entries (even the last $b$ entries) in the $a$th row of a matrix from $D_{a-1}$ can be arbitrary elements of $G$. From the fact that $\{1,e\}$ is a basis of $D_{a-1}$ over $S_{a-1}$, we infer that the last $b-1$ entries in the $a+1$th row of a matrix from $S_{a-1}$ can be arbitrary as well. We conclude that, given any $(a+1)\times (b-1)$ matrix $g$ with entries from $G$ there is, by the inductive hypothesis, an $f\in F$ such that $g$ coincides with the upper right corner block of $f$ bar the $a+1$th row. At the same time, we have an $s\in S_{a-1}$ whose last $b-1$ entries in the $a+1$th row coincide with the last row of $g$ whilst the last $b-1$ entries of the first $a$ rows of~$s$ are zero. Hence $f + s$ is the (unique) matrix from $F$ \st $g$ coincides with its upper right corner.

\smallskip

$(2)\Longrightarrow (1)$. The $1$-tightness gives us the embedding of $G$ into $F$ where an element $g\in G$ goes to the unique matrix in $F$ with the first row equal to $(g,0,0,\dots ,0)$. Further, we denote by $f_i, 0\leq i<n$, the matrices in $F$ with the first row equal to $(0,\dots, 0,1,0,\dots, 0)$ where the $1$ is on the $i+1$th position. It is easy to check that, for this choice of $f_0,\dots, f_{n-1}$, the given tight embedding $F\subseteq M_n(G)$ coincides with the one constructed for the division ring extension $G\subseteq F$ in the very first paragraph of the proof. In particular, we see that $\pd {}_G F = n$.

For each $0\leq k<n$, we define the $L_k$, $D_k$ and $S_k$ as before. We are going to show that $\pd {D_k}_{S_k} = 2$ for all $0\leq k<n-1$. Fixing the $k$, we denote by $e$ the (unique) matrix in $F$ whose first $k$ rows end with $n-k$ zeros and whose last $n-k$ entries in the $k+1$th row are $0,1,0,\dots, 0$. We claim that $\{1,e\}$ is a basis of $D_k$ over $S_k$.

From the description of matrices in $D_k$ and $S_k$, we see that $e\in D_k\setminus S_k$. Put $a = k+2$. Given a matrix $d\in D_k$, we use the $a$-tightness to pick $s\in S_k$ whose last $n-k-1 = b$ entries in the $a$th row are the same as the last $b$ entries of the $a-1$th row of $d$. Then the matrix $es\in D_k$ has the same last $b$ entries in the $a-1$th row as $d$. Consequently, $d = es + (d-es)$ where, clearly, $d-es\in S_k$.

We have thus proved that $\pd {D_k}_{S_k} = 2$ for $0\leq k< n-1$. Also, trivially, $D_{n-1} = S_{n-1}$. Using Lemma~\ref{l:Garcia}, we conclude that the $(G,F)$-bimodule $F$ has the right dimension sequence $(1,2,2,\dots, 2,1,n)$ of length $n+2$.
\end{proof}

\begin{rem} Notice that the first paragraphs of the proof of the respective two implications give a bijection between the division ring extensions $G\subseteq F$ with a given left $G$-basis $\{1 = f_0,\dots, f_{n-1}\}$ of $F$ and $1$-tight embeddings $F\to M_n(G)$.
\end{rem}

Unfortunately, the question of existence of tight embeddings for a given $n\in \N$ seems to be a rather hard problem in itself. The construction in Section~\ref{sec:conjA} shows that we can take care of the cases $a,b\leq 2$ which, in turn, implies the existence of tight embeddings for $n<5$. For the proof of the existence of a squared small counterexample to the pssc, it would be enough to show the existence of tight embeddings for infinitely many positive integers $n$. With respect to the compactness result in the first section, it would be even enough to have, for any positive integer $k$, an integer $n\geq k$ large enough such that there is an embedding $F\subseteq M_n(G)$ which is $a$-tight for each $a\in\{1,\dots,k\}$.

\section{Conjecture A and partial results}
\label{sec:conjA}

Of course, the mere translation in the previous section cannot turn the hard problem into an easy one. We have to present a change of perspective which could potentially allow us to make some headway. We propose the following conjecture which essentially speaks about a particular way how to adjoin an element to a~division ring embedded into a simple artinian ring.

\begin{conj}\label{c:conjA} Let $n\in\N, n>1$ and $\varphi:F\to M_n(G)$ be a ring embedding where $G$ and $F$ are division rings. Assume that $a,b$ are positive integers such that $a + b = n+1$. Let $A$ be an $a\times b$ matrix with entries from $G$. Then there exist division ring extensions $G\subseteq G^\prime$ and $F\subseteq F^\prime$ and a ring embedding $\varphi^\prime:F^\prime\to M_n(G^\prime)$ such that $\varphi^\prime\restriction F = \varphi$ and $\Img(\varphi^\prime)$ contains a matrix whose upper right $a\times b$ block is~$A$.
\end{conj}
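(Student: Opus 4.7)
The plan is to realise Conjecture~A as an embedding problem for a universal ring and attack it using Schofield's coproduct machinery from \cite[Chapters~4 and 13]{S0}. Write $\Omega\subseteq\{1,\dots,n\}^2$ for the complement of the prescribed upper right $a\times b$ block, form the free $G$-ring $T=G\langle x_{ij}\mid (i,j)\in\Omega\rangle$, and let $X=(y_{ij})\in M_n(T)$ be the matrix with $y_{ij}=x_{ij}$ for $(i,j)\in\Omega$ and with the entries of $A$ on the prescribed block. Any extension $(F',G',\varphi')$ as sought in the conjecture, witnessed by some $f'\in F'$ with $\varphi'(f')$ realising $A$, induces a ring homomorphism from the $G$-subring $R=\langle \varphi(F),X\rangle$ of $M_n(T)$ into $M_n(G')$; conversely, an embedding of $R$ into a ring of the form $M_n(G')$ whose image lies inside a division subring containing $\varphi(F)$ gives rise to such a triple on taking $F'$ to be that division subring.

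The first step would be to reduce to the case $A=0$ by translating $X$ by the matrix in $M_n(G)$ with $A$ in its upper right corner and zeros elsewhere, and then to present $R$ in a form to which Schofield's theorems apply. After the translation, $X$ lies in the subbimodule $e_1 M_n(T)+M_n(T) e_2$, where $e_1$ is the idempotent of $M_n(G)$ projecting onto the last $n-a$ columns and $e_2$ onto the first $a-1$ rows. Via the matrix units, $R$ is then visibly the ring obtained from $M_n(G)$ by adjoining the specific free $(M_n(G),M_n(G))$-bimodule on $|\Omega|$ generators carried by this subbimodule, which is exactly the setting of the coproduct theorems in \cite[Chapter~4]{S0}. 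The second step is to apply those theorems so as to embed $R$ into a simple artinian ring, aiming for it to be of the form $M_n(G')$ with $G'\supseteq G$ a division ring, and to take $F'$ to be a division subring of $M_n(G')$ containing the image of $R$.

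The main obstacle is that Schofield's machinery produces a simple artinian hull of $R$ without direct control on its matrix degree over the attached division ring: one needs the degree to equal exactly $n$, rather than some $m\ne n$, so that $\varphi'$ lands in $M_n(G')$ and not in $M_m(G'')$. The invariant governing the degree is a dimension count on the presenting bimodule, and while the shape constraint $a+b=n+1$ is exactly the combinatorial balance suggesting the count should come out to $n$ (compare the dimension sequence $(1,2,2,\dots,2,1,n)$ appearing in Theorem~\ref{t:tight}), turning this heuristic into a rigorous computation in the general case is delicate; moreover, one must still verify that the image of $R$ admits a division-ring closure inside $M_n(G')$ containing $\varphi(F)$ and $X$. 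A natural strategy for circumventing both issues is to proceed by induction on $\min(a,b)$, adjoining one row or one column of the block at a time so that only the rank-$2$ input to Schofield's theorems is ever invoked; this is essentially what allows the cases $\min(a,b)\le 2$ to be handled in the remainder of the section, but a genuinely new idea appears to be required for $\min(a,b)\ge 3$.
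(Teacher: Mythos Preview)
The statement you are attempting to prove is labelled \emph{Conjecture~A} in the paper and is not proved there; it is explicitly left open. The paper establishes only the special cases $a\in\{1,2,n-1,n\}$ (equivalently $\min(a,b)\le 2$) in Proposition~\ref{p:speccases}, and Remark~\ref{r:conjA} names $n=5$, $a=3$ as the first unresolved instance. Your proposal is consistent with this: you yourself conclude that ``a genuinely new idea appears to be required for $\min(a,b)\ge 3$'', so what you have written is a programme together with an honest diagnosis of where it stalls, not a proof of the conjecture.

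Even restricted to the cases the paper does settle, your outline is not the paper's argument and has loose ends of its own. For $a=1$ the paper does not build a universal subring of $M_n(T)$ and then try to control the matrix degree of a simple artinian hull; it forms the simple artinian coproduct of $M_n(F)$ and $M_n(G)$ amalgamating $F$, which is automatically of the form $M_n(F')$, isolates a pointed cyclic $(M_n(G),F')$-bimodule inside it, and then takes a further simple artinian coproduct amalgamating that pointed bimodule to obtain $M_n(G')$ and $\varphi'$. For $a=2$ the paper first arranges $1$-tightness and then applies \cite[Theorem~1]{S} to force $\dim F'_{G'}=2$ while preserving the left basis $\{1,f_1,\dots,f_{n-1}\}$; $2$-tightness is read off from that. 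In your sketch, the reduction to $A=0$ by translating $X$ is suspect: the translating matrix lies in $M_n(G)$ but not, a~priori, in $R=\langle\varphi(F),X\rangle$, so the subring does not obviously change presentation, and note that the case $A=0$ of the conjecture itself is vacuous since $\varphi(0)=0$ already has zero upper right block. More seriously, the step ``take $F'$ to be a division subring of $M_n(G')$ containing the image of $R$'' presupposes exactly what must be constructed; Schofield's coproduct and localisation theorems produce simple artinian rings, not division closures of arbitrary subrings inside them.
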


Writing that ``Conjecture~A holds true'', we mean that the above statement is true for all $n>1$ and every pair $a,b\in\N$ such that $a+b = n+1$.
First, let us observe that Conjecture~A is sufficient to solve our problem.

\begin{prop} \label{p:conjAsuf} Assume that Conjecture~A holds true. Then there exists a tight embedding $\varphi:F\to M_n(G)$ for any given $n\in\mathbb N$.
\end{prop}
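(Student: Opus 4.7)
The strategy is a transfinite union-of-chains construction driven by Conjecture~A. Start with any embedding $\varphi_0\colon F_0\to M_n(G_0)$---for concreteness, take $F_0=G_0$ to be any division ring and $\varphi_0(g)=gI_n$. I then build inductively a continuous chain of ring embeddings $\varphi_i\colon F_i\to M_n(G_i)$ for $i<\kappa$ (with $\kappa$ a sufficiently large regular cardinal) satisfying $G_i\subseteq G_j$, $F_i\subseteq F_j$ and $\varphi_j\restriction F_i=\varphi_i$ whenever $i\leq j<\kappa$. At a successor stage $i+1$ I pull a request $(a,A)$ from a bookkeeping schedule, where $a\in\{1,\dots,n\}$, $b=n+1-a$, and $A\in M_{a\times b}(G_i)$; applying Conjecture~A to $\varphi_i$ and $A$ yields extensions $G_i\subseteq G_{i+1}$ and $F_i\subseteq F_{i+1}$ together with an embedding $\varphi_{i+1}\colon F_{i+1}\to M_n(G_{i+1})$ that extends $\varphi_i$ and whose image contains a matrix with upper right $a\times b$ block equal to $A$. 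At a limit stage $\lambda<\kappa$ I set $G_\lambda=\bigcup_{i<\lambda}G_i$, $F_\lambda=\bigcup_{i<\lambda}F_i$ and take $\varphi_\lambda$ to be the evident union, which is a ring embedding by continuity of the chain.

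The bookkeeping is set up so that every triple $(a,A,i)$ with $A\in M_{a\times b}(G_i)$ is eventually processed at some successor stage $j>i$; a standard dovetailing suffices, and choosing $\kappa$ regular with $|G_i|<\kappa$ for every $i<\kappa$ ensures the enumeration closes off inside $\kappa$. At the end, put $F=\bigcup_{i<\kappa}F_i$, $G=\bigcup_{i<\kappa}G_i$ and let $\varphi\colon F\to M_n(G)$ be the union of the $\varphi_i$. Now given any $a\in\{1,\dots,n\}$ and any $a\times b$ matrix $A$ over $G$, the finitely many entries of $A$ all lie in some $G_i$, so the schedule processed $(a,A)$ at some later successor stage, producing an $f\in F$ such that $\varphi(f)$ has $A$ as its upper right $a\times b$ block. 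Hence $\varphi$ is $a$-tight for every $a\in\{1,\dots,n\}$, i.e.\ tight.

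The only delicate point, beyond Conjecture~A itself (which is assumed), is the bookkeeping: each invocation of the conjecture can introduce brand new elements of $G$ and therefore brand new matrices demanding processing, so one must ensure that the schedule keeps pace with this growth. This is exactly the sort of task that a diagonal enumeration combined with a regular cardinal $\kappa$ large enough to absorb the intermediate cardinalities handles in the standard Fra\"iss\'e-limit fashion. The remaining details---continuity at limit stages, injectivity and homomorphism property of the limit embedding, the fact that $\varphi_\lambda$ maps into $M_n(G_\lambda)$ rather than some larger matrix ring---are routine and can be verified finite-element-wise along the chain.
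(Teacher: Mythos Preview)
Your proposal is correct and follows the same strategy as the paper: repeatedly apply Conjecture~A along a chain of embeddings, take unions at limits, and arrange the bookkeeping so that every upper-right block over the final $G$ gets witnessed. The paper organizes the iteration a little differently---an outer countable chain $(\varphi_m)_{m<\omega}$ in which stage $m$ fixes $a=(m\bmod n)+1$ and then runs an inner $\omega$-chain $\varphi_{m-1}\subseteq\varphi_{m-1}^+\subseteq\varphi_{m-1}^{++}\subseteq\dotsb$ (each step itself a transfinite pass through all current $a\times b$ matrices) so that $\varphi_m$ is already $a$-tight---but this is just a different packaging of the same closing-off argument.

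One small point: your condition ``$\kappa$ regular with $|G_i|<\kappa$ for all $i<\kappa$'' is stated as if such a $\kappa$ can be chosen in advance, yet Conjecture~A gives no a priori bound on $|G'|$, so the $|G_i|$ could in principle outrun any preassigned cardinal. The paper's two-level countable scheme sidesteps this entirely. Your version is easily repaired either by invoking downward L\"owenheim--Skolem to keep each successor extension of size at most $|G_i|+\aleph_0$, or simply by reorganizing into an $\omega$-tower of transfinite sweeps as the paper does; either way the argument goes through.
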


\begin{proof} Let us assume that $n>1$ since the case $n = 1$ trivially holds without even assuming Conjecture A. Let us start with an arbitrary embedding $\varphi_{-1}:F_{-1}\to M_n(G_{-1})$ where $F_{-1}, G_{-1}$ are division rings. We constuct $\varphi:F\to M_n(G)$ as the union of countable increasing chain $(\varphi_m:F_m\to M_n(G_m)\mid m\in\{-1,0,1,2,\dots\})$ of embeddings where, for each $m < \omega$, we ensure that $\varphi_m$ is $(m\!\!\mod n)+ 1$-tight.

So let $m<\omega$ be fixed and $\varphi_{m-1}$ already constructed. Set $a = (m\!\!\mod n)+ 1$ and $b= n+1-a$. Let $\mathcal M$ denote the set of all $a\times b$ matrices with entries from $G_{m-1}$. Iteratively using Conjecture A, taking unions in limit steps, we extend $\varphi_{m-1}$ to $\varphi_{m-1}^+:F_{m-1}^+\to M_n(G_{m-1}^+)$ with the property that each $A\in\mathcal M$ occurs as the upper right corner block of a matrix in $\Img(\varphi_{m-1}^+)$. Let $\mathcal M^+$ be the set of all $a\times b$ matrices with entries from $G_{m-1}^+$. We repeat the process with $\varphi_{m-1}^+$ and $\mathcal M^+$ to obtain $\varphi_{m-1}^{++}$, $\mathcal M^{++}$, and so on. Finally, we define $\varphi_m$ as the union of the chain $\varphi_{m-1}\subseteq\varphi_{m-1}^+\subseteq\varphi_{m-1}^{++}\subseteq\dotsb$. It is straightforward to check that $\varphi_m$ is $a$-tight as well as checking that $\varphi = \bigcup_{m<\omega} \varphi_m$ is actually tight.
\end{proof}

\begin{rem} \label{r:submod} In fact, we have shown a little bit more. Assuming Conjecture~A, we actually extended an arbitrary ring homomorphism $\varphi_{-1}:F_{-1}\to M_n(G_{-1})$, where $F_{-1}$ and $G_{-1}$ are division rings, into a tight embedding $\varphi$. Notice that, if the starting homomorphism $\varphi_{-1}$ is already $a$-tight for some $a \in \{1,\dots,n\}$, then we necessarily have $\Img(\varphi_{-1}) = M_n(G_{-1})\cap \Img(\varphi)$. Indeed, if we have a matrix $A$ in the intersection on the right-hand side, it shares the same $a\times (n+1-a)$ upper right corner block with a matrix from $\Img(\varphi_{-1})$ which means that these two matrices actually coincide (their difference is not a regular matrix). The other inclusion is trivial.
\end{rem}

Next, we show how to apply tools from \cite[Chapter~13]{S0} to show that Conjecture~A actually holds true for any $n$ and $a\in\{1,2,n-1,n\}$. For notational details, we refer to \cite{S0} or \cite{S}.

\begin{prop}\label{p:speccases} Conjecture A holds for the setting $n>1$ and $a\in\{1,2,n-1,n\}$.
\end{prop}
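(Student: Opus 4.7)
The plan is to reduce to two cases via a transpose symmetry and then, in each case, build the required extension using Schofield's universal adjunction machinery from \cite[Chapter~13]{S0}. For the reduction, let $J\in M_n(\Z)$ be the reverse-identity matrix and, for any embedding $\psi:K\hookrightarrow M_n(H)$ of division rings, define $\psi^\star:K^{op}\to M_n(H^{op})$ by $\psi^\star(k)=J\psi(k)^TJ$. This is a ring embedding, since transposition is an anti-isomorphism $M_n(H)\to M_n(H^{op})$ and conjugation by the involutive $J$ is a ring automorphism. A direct index computation yields $(\psi^\star(k))_{i,j}=\psi(k)_{n+1-j,n+1-i}$, so the upper right $a\times b$ block of $\psi^\star(k)$ is (after identifying entries via $H^{op}\leftrightarrow H$) the transpose of the upper right $b\times a$ block of $\psi(k)$. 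Since $\psi\leftrightarrow\psi^\star$ is a bijection between embeddings of division rings and of their opposites, and the class of division rings is closed under taking opposites, Conjecture~A for $(a,b)$ is equivalent to Conjecture~A for $(b,a)$; hence $a=n$ and $a=n-1$ reduce respectively to $a=1$ and $a=2$.

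For the case $a=1$, I would interpret the problem via the $(F,G)$-bimodule $V={}_F G^n_G$ arising from $\varphi$: the first row of $\varphi(f)$ is the element $\pi_1\cdot f\in V^*=\Hom_G(V_G,G_G)\cong G^n$, where $\pi_1$ is the first-coordinate projection. The task thus becomes to adjoin an element $X$ to $F$ so that $\pi_1\cdot X$ equals the prescribed row $A$; equivalently, to realize a prescribed right $G$-linear map $V\to G$ by means of a new element of an extension $F\subseteq F'$. I would form the coproduct of $\varphi(F)$ with $\Z\langle X\rangle$ inside a suitable simple artinian ambient, impose the rank-one bimodule relation $\pi_1\cdot X=A$ (which is a full matrix relation in the Cohn sense), and invoke the Cohn--Schofield universal localization results of \cite[Chapter~13]{S0} (used analogously in \cite{S}) to extract a division ring extension $F\subseteq F'$ with a compatible division ring extension $G\subseteq G'$, so that $\varphi$ lifts to an embedding $\varphi':F'\to M_n(G')$ whose image contains the desired matrix.

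For the case $a=2$ the same strategy applies with a rank-two datum in place of the rank-one one: prescribing the upper right $2\times(n-1)$ block of $\varphi(f)$ amounts to specifying the pair $(\pi_1\cdot f,\pi_2\cdot f)\in V^*\oplus V^*$ up to the components already determined by $\varphi$. One imposes the corresponding rank-two full matrix relation on an adjoined element and again extracts the division ring extension through Schofield's localization machinery, the argument paralleling that of $a=1$.

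The main technical obstacle, common to both cases, is to confirm that the universal ring obtained by freely adjoining $X$ together with the imposed rank-one (resp.\ rank-two) bimodule relations really admits a universal skew field of fractions, and that the accompanying simple artinian extension is still of the form $M_n(G')$ for some division ring $G'$ rather than a larger-size matrix ring. These are precisely the dimension-preservation statements that Schofield establishes for rank-one and rank-two full matrix relations; the lack of analogous control in higher rank is what prevents the present method from reaching $a\in\{3,\dots,n-2\}$, consistent with the author's remark after the statement that the restriction $a,b\leq 2$ is intrinsic to the approach.
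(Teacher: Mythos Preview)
Your transpose-symmetry reduction of $a=n,n-1$ to $a=1,2$ is correct and is a clean alternative to the paper's ``swap sides'' argument.

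The substantive gap is in the remaining two cases. For $a=1$ you describe adjoining a free element $X$ and imposing a ``rank-one bimodule relation'', then invoking Cohn--Schofield localization; but you never identify which theorem guarantees that the resulting universal ring has a simple artinian quotient of the \emph{same} matrix size $n$ over a division ring $G'$, with $F$ still embedding as a division subring. The paper's argument is concrete and different: it forms the simple artinian coproduct $M_n(F)\circ_F M_n(G)$ (which is $M_n(F')$ for some division ring $F'$), exhibits the pointed cyclic $(M_n(G),F')$-bimodule $(M_n(F')g_{11},g_{11})$, verifies that $\{\psi(e_{ii})g_{11}\}$ is an $F'$-basis (this is the nontrivial step, citing \cite[p.~210]{S0}), and only then passes to the universal localization of the coproduct amalgamating this pointed bimodule to obtain $M_n(G')$. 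Your sketch skips exactly the basis computation that does the work.

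For $a=2$ the gap is more serious. You propose to impose a ``rank-two full matrix relation'' and assert that Schofield's machinery covers this, but you cite nothing specific, and in fact the paper does \emph{not} proceed this way. Instead it first uses the $a=1$ case to assume $\varphi$ is $1$-tight, reads off the resulting left $G$-basis $\{1,f_1,\dots,f_{n-1}\}$ of $F$, and then invokes \cite[Theorem~1]{S} (equivalently \cite[Theorem~13.12]{S0}) to extend $G\subseteq F$ to $G'\subseteq F'$ keeping the same left basis while forcing $\{1,f_1\}$ to be a \emph{right} basis. A short direct computation with this right basis then shows the extended embedding is $2$-tight (hence realizes every $2\times(n-1)$ block, in particular the given $A$). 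Your ``rank-two'' route would need an analogue of the $a=1$ amalgam argument one rank higher, and that is precisely what is not available in \cite{S0}; the paper circumvents this by trading the block-realization problem for a basis-control problem that Schofield \emph{did} solve. Also, your phrase ``up to the components already determined by $\varphi$'' is not accurate: the first-column entries of the sought $f$ are not determined by anything, they are simply unconstrained.
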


\begin{proof} \underline{Case $a = 1$}. This is a special case of \cite[Theorem~9]{S} or \cite[Theorem~13.13]{S0}. For the reader's convenience, we present the part that is actually needed here.

Consider the canonical embedding $F\hookrightarrow M_n(F)$. Then the simple artinian coproduct, \cite[Page~81]{S0}, of $M_n(F)$ and $M_n(G)$ amalgamating $F$ (which is embedded into $M_n(G)$ by $\varphi$) has a form $M_n(F^\prime)$ where $F^\prime$ is a division ring, cf. \cite[Theorem~5.6]{S0}. We get an embedding $\eta:M_n(F)\hookrightarrow M_n(F^\prime)$. We can assume, without loss of generality, that this embedding is actually an inclusion by choosing $\{\eta(f_{ij})\}_{i,j = 1}^n$ as the canonical matrix units (see \cite[Chapter~1.7]{C}) where $f_{ij}\in M_n(F)$ denotes, for each $i,j\in\{1,\dots, n\}$, the matrix with the only nonzero entry $1$ in the position $(i,j)$. Thus we get a division ring extension $F\subseteq F^\prime$. We also have the following commutative diagram consisting of ring embeddings.

$$\begin{CD}
	M_n(F) @>{\subseteq}>> M_n(F^\prime)	\\
	@AAA @A{\psi}AA \\
	F  @>{\varphi}>> M_n(G)
\end{CD}$$

Let $g_{11}\in M_n(F^\prime)$ denote the matrix with a single $1$ in the upper left corner and zero everywhere else. Put $M = M_n(F^\prime)g_{11}$. Then $M$ is naturally an $(M_n(G),F^\prime)$-bimodule. Note that the left action of $M_n(G)$ is induced by $\psi$. We see that $\dim M_{F^\prime} = n$. For each $i = 1,\dots, n$, let $e_{ii}\in M_n(G)$ denote the matrix with the only nonzero element $1$ in the position $(i,i)$. A nontrivial amount of work, \cite[Page~210]{S0}, 
shows that $\{\psi(e_{ii})g_{11}\mid i = 1,\dots, n\}$ forms a basis of $M_{F^\prime}$. Consequently, $(M,g_{11})$ is a pointed cyclic bimodule generated by $g_{11}$, i.e.\ $M= M_n(G)g_{11}F^\prime$.

To define $G^\prime$, we consider the universal localisation of the simple artinian coproduct of $M_n(G)$ and $F^\prime$ with $(M,g_{11})$ amalgamated, cf.\ \cite[Chapter~13]{S0}. This universal localisation is again a simple artinian ring $M_n(G^\prime)$. It also gives us a~commutative diagram consisting of ring embeddings as below.

$$\begin{CD}
	M_n(G) @>{\subseteq}>> M_n(G^\prime)	\\
	@A{\varphi}AA @A{\varphi^\prime}AA \\
	F  @>{\subseteq}>> F^\prime
\end{CD}$$

In $M_n(G^\prime)$, we consider its sub-$(M_n(G),F^\prime)$-bimodule $M_n(G)F^\prime$. By the amalgam construction, there is a bimodule isomorphism of $M$ and $M_n(G)F^\prime$ sending $g_{11}$ to $1$ (see the last paragraph on \cite[pg.\ 199]{S0}\footnote{We can use \cite[Theorem~13.2]{S0} since the multiplication by $\left(\begin{smallmatrix} 0 & x \\ 0 & 0\end{smallmatrix}\right)$, where $x = g_{11}$ in our case, is a full map by the remark immediately preceding the statement of \cite[Theorem~13.2]{S0}.}). It follows that $\{e_{ii}\mid i = 1,\dots, n\}$ forms a basis of $M_n(G)F^\prime$ as a right $F^\prime$-space. In particular, for each $g\in M_n(G)$, there exists an $f\in F^\prime$ such that $e_{11}g = e_{11}f$. Otherwise said, we obtained a universal solution for Conjecture A with $a = 1$.

\smallskip

\underline{Case $a = n$}. The proof is the same as above. We just have to swap sides and let $e_{nn}$ play the role of $e_{11}$.

\smallskip

\underline{Case $a = 2$}. We show how to extend $\varphi$ to a $2$-tight embedding $\varphi^\prime$. Since the case $a = 1$ is done, we can without loss of generality start with $\varphi$ which is $1$-tight (following the proof of Proposition~\ref{p:conjAsuf} caring only about 1-tightness). In particular, there is the embedding $\nu:G\hookrightarrow F$ which sends an element $g\in G$ to the only $f\in F$ such that the first row of the matrix $\varphi(f)$ is $(g, 0, 0, \dots, 0)$. As in the first paragraph of the proof of Theorem~\ref{t:tight} $(2)\Longrightarrow (1)$, we define the elements $1 = f_0,f_1,\dots,f_{n-1}\in F$ and see that they form a left basis of $F$ over $G$.

We use \cite[Theorem~1]{S} or \cite[Theorem~13.12]{S0} to find a division ring embedding $\nu^\prime:G^\prime \hookrightarrow F^\prime$ extending $\nu$ such that the left basis of $F^\prime$ over $G^\prime$ is $\{1,f_1,\dots, f_{n-1}\}$ whilst the right basis is $\{1,f_1\}$. The left basis gives us naturally\footnote{See also the remark following Theorem~\ref{t:tight}.} the extension of $\varphi$ to the $1$-tight embedding $\varphi^\prime: F^\prime\to M_n(G^\prime)$. We claim that it is also $2$-tight.

Recall that the first row of the matrix $\varphi^\prime (f_1)$ is $(0,1,0,0,\dots,0)$. Since the first row of a matrix $\varphi^\prime(f)$, $f\in F^\prime$, can be arbitrary and there exist $c,d\in G^\prime$ such that $f = \nu^\prime(c)+f_1\nu^\prime(d)$, we infer that the matrices in $\Img(\varphi^\prime\nu^\prime)$ can have arbitrary entries in the second row bar the first one, i.e.\ given $g_2,g_3,\dots, g_n\in G^\prime$ there exists (a unique) matrix in $\Img(\varphi^\prime\nu^\prime)$ with the second row equal to $(x,g_2,g_3,\dots,g_n)$ for some $x\in G^\prime$. As a consequence, given $\begin{pmatrix} g_2 & g_3 & \dots & g_n \\ h_2 & h_3 & \dots & h_n \end{pmatrix}$ with entries in $G^\prime$, we see that it is the upper right corner block of the matrix $\varphi^\prime(f)$ for $f\in F^\prime$ such that $f = \nu^\prime(c)+f_1\nu^\prime(d)$ where the second row of $\varphi^\prime\nu^\prime(d)$ is $(x,g_2,\dots, g_n)$ whilst the second row of $\varphi^\prime\nu^\prime(c)$ has the form ``$(y,h_2,\dots,h_n)$ minus the second row of $\varphi^\prime(f_1\nu^\prime(d))$'' for suitable $x,y\in G^\prime$.

\smallskip

\underline{Case $a= n-1$} follows by the same reasoning, starting with an $n$-tight $\varphi$. 	
\end{proof}

Proposition~\ref{p:speccases} gives us the existence of tight embeddings for $n<5$. As a~special case, using Theorem~\ref{t:tight}, we obtain

\begin{cor} \label{c:122214} There exists a division ring extension $G\subseteq F$ where ${}_GF_F$ has the right dimension sequence $(1,2,2,2,1,4)$.
\end{cor}

In particular, $R_F$ corresponding to the extension from Corollary~\ref{c:122214} is an example of a~hereditary artinian ring of non-crystallographic Coxeter type $I_2(6)$ (for explanation, see \cite[Page~393]{R}) as opposed to the crystallographic Coxeter type~$G_2$ represented, for instance, by the triangular matrix algebra $\begin{pmatrix} \mathbb Q & \mathbb Q[\sqrt[3]{2}] \\ 0 & \mathbb Q[\sqrt[3]{2}] \end{pmatrix}$ which has also precisely $6$ indecomposable modules up to isomorphism. The existence of tight embeddings for $n\geq 5$ remains open; see also \cite[Problem~2.10]{Si}.

\begin{rem}\label{r:conjA} The proof of Proposition~\ref{p:speccases} above relies heavily on the ingenious tools developed by Aidan Schofield in 1980s. At that time, he did not succeed in generalizing them further, so that they would be applicable for other cases too (the first one on hand is $n = 5$ and $a = 3$). It seems like a good idea to search for an as elementary as possible (although tedious, perhaps) proof of Conjecture A for $a = 1,2$. It might give us a much needed insight into the character of data we have to control if we want to establish a construction which would succeed also for $a>2$.

Viewed from this perspective, despite still being pretty hard, the translated problem does not look as frightening and impenetrable as it looked when it asked us to control the right dimension of $F$, $F^r$, $F^{rr}$, etc.
\end{rem}

\section{Existentially closed $1$-tight embeddings}
\label{sec:ECem}

It might seem that, in trying to construct tight embeddings, we aim for something very specific, almost pathological. Assuming Conjecture~A, we demonstrate in the sequel that this is not entirely true.

We start by axiomatizing tight embeddings in the similar manner as we did with division ring extensions in the beginning of Section~\ref{sec:pssc}. For that matter, we fix an integer $n>1$ and consider the following theory $T_n$ in the language of (unital) rings extended by a unary relation symbol $F$ and constant symbols $e_{ij}$ where $i,j=1,\dots, n$. The axioms of $T_n$ are:
\begin{enumerate}
	\item the axioms of rings;
	\item the axioms saying that $e_{ij}$ is the full set of matrix units, i.e.\ $\sum_{i=1}^n e_{ii} = 1$, $e_{ij}e_{kl} = e_{il}$ if $j = k$, and $e_{ij}e_{kl} = 0$ otherwise.
	\item the axioms saying that a nonzero element centralized by all $e_{ij}$, $i,j=1,\dots, n$, has an inverse which is also centralized by all the elements $e_{ij}$;
	\item the axioms saying that $F$ defines a division subring;
  \item for each $k\in\{1,\dots,n\}$ the $k$-tightness axiom $$(\forall x)(\exists y\in F)\,\left(\sum_{i=1}^k e_{ii}\right)(x-y)\left(\sum_{i=k}^n e_{ii}\right) = 0.$$
\end{enumerate}

The models of $T_n$, if there are any, are precisely the tight embeddings $F\subseteq M_n(G)$, or more precisely: each model of $T_n$ has the form $M_n(G)$ where $G$ is a~division ring and a full set of $G$-centralizing matrix units in $M_n(G)$ is fixed (the first three axioms); moreover, we have a specified division subring $F\subseteq M_n(G)$ (axiom $(4)$) such that this ring embedding is tight (axiom $(5)$). Of course, the notion of tightness is relative to the fixed full set of matrix units. For more information on matrix units see \cite[Chapter~1.7]{C}.

Consider now the subtheory $T_n^1$ of $T_n$ where we drop all the axioms from $(5)$ with the sole exception of $k = 1$. This time, we know that $T_n^1$ is satisfiable since there are many embeddings $F\subseteq M_n(G)$ which are $1$-tight (as seen in the proof of Theorem~\ref{t:tight}, each division ring extension $G\subseteq F$ with $\dim {}_GF = n$ naturally defines one).

Recall that a model $M$ of a theory $T$ is \emph{existentially closed} if $M$ satisfies an existential sentence with parameters from $M$ provided that this sentence is satisfied in an extension $M^\prime$ of $M$ which is also a model of $T$. Existentially closed models play in general model theory similar role as, say, algebraically closed fields in the theory of fields. For more details and some illustration of the concept, we refer to \cite[Chapter~6.5]{C}. We are interested in existentially closed models of $T_n^1$. Their very existence follows from the fact that $T_n^1$ is an inductive theory (cf.\ \cite[Lemma~3.5.7]{KC}), i.e.\ axiomatized by formulas of the type $(\forall \bar x)(\exists \bar y)\,\varphi(\bar x,\bar y)$ where $\varphi$ is quantifier-free; hence the union of any ascending chain of models of $T_n^1$ is again a model of $T_n^1$. However, if we assume Conjecture~A in addition, we get the following

\begin{prop}\label{p:EC} Assume that Conjecture~A holds true. Then, for any given $n>1$, each existentially closed model of $T_n^1$ is a tight embedding $F\subseteq M_n(G)$.
\end{prop}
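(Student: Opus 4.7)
The plan is to combine the extension procedure produced (under Conjecture~A) by Proposition~\ref{p:conjAsuf} with the very definition of existential closure. Fix an existentially closed $M\models T_n^1$; by axioms $(1)$--$(4)$ together with the $k=1$ case of $(5)$, $M$ has the form $M_n(G)$ for a division ring $G$ with a distinguished full set of matrix units $(e_{ij})$ and a distinguished division subring $F$, and the inclusion $F\hookrightarrow M_n(G)$ is $1$-tight. The task is to verify the $k$-tightness axiom in $M$ for every $k\in\{2,\dots,n\}$.

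Each such axiom is $\forall\exists$, with inner part the existential formula (in the single parameter $x$)
$$\vartheta_k(x)\;\equiv\;(\exists y)\bigl(F(y)\ \&\ \bigl(\textstyle\sum_{i\leq k}e_{ii}\bigr)(x-y)\bigl(\textstyle\sum_{i\geq k}e_{ii}\bigr)=0\bigr).$$
So it is enough, for each fixed $x\in M$, to exhibit an extension $M\subseteq M^\prime$ with $M^\prime\models T_n^1$ in which $\vartheta_k(x)$ holds; existential closedness will then move the witness down into $M$. To obtain such an $M^\prime$, I feed the underlying embedding $\varphi_{-1}:F\hookrightarrow M_n(G)$ of $M$ into the construction from the proof of Proposition~\ref{p:conjAsuf}. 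Conjecture~A delivers a tight embedding $\varphi:F^\prime\hookrightarrow M_n(G^\prime)$ with $F\subseteq F^\prime$ and $G\subseteq G^\prime$; let $M^\prime$ be the resulting structure. A tight embedding models the full theory $T_n$ and hence $T_n^1$, and the $k$-tightness axiom of $M^\prime$ applied at $x$ supplies a $y\in F^\prime$ witnessing $\vartheta_k(x)$.

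The one step deserving genuine care, and which I expect to be the main obstacle, is verifying that $M^\prime$ really is a substructure-extension of $M$ in the language of $T_n^1$. The ring operations and the matrix-unit constants are evidently preserved by the inclusion $M_n(G)\hookrightarrow M_n(G^\prime)$; the subtle condition is absoluteness of the unary predicate $F$, i.e.\ $F = M_n(G)\cap F^\prime$. This, however, is precisely the content of Remark~\ref{r:submod}: since the seed $\varphi_{-1}$ is already $1$-tight when fed into the construction, the resulting tight image meets $M_n(G)$ exactly in the original $F$. With this absoluteness secured, the proof is routine: for every $k\in\{2,\dots,n\}$ and every $x\in M$, the formula $\vartheta_k(x)$ holds in the extension $M^\prime\models T_n^1$ and hence, by existential closedness, in $M$. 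Consequently $M\models T_n$, i.e.\ the embedding $F\subseteq M_n(G)$ is tight.
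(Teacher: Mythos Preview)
Your argument is correct and follows essentially the same route as the paper: extend the given $1$-tight embedding to a tight one via Proposition~\ref{p:conjAsuf}, invoke Remark~\ref{r:submod} to guarantee $F'\cap M_n(G)=F$ so that the extension is a genuine submodel inclusion, and then pull the existential witnesses for $k$-tightness down by existential closedness. Your write-up is in fact slightly more explicit than the paper's about isolating the existential formula $\vartheta_k(x)$ and about why absoluteness of the predicate $F$ is the crux, but the underlying argument is the same.
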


\begin{proof} Let $\psi:F\subseteq M_n(G)$ be an existentially closed model of $T_n^1$. Using the remark following Proposition~\ref{p:conjAsuf}, we can extend $\psi$ to a tight embedding $\varphi:F^\prime \subseteq M_n(G^\prime)$ where $F^\prime\cap M_n(G) = F$. Consequently $\psi$ is actually a submodel of $\varphi$. Fix a~$k\in\{2,\dots, n\}$ and consider the axiom from $(5)$ expressing $k$-tightness. Taking any $x\in M_n(G)$ as a parameter, we see that there exists a desired $y\in F^\prime$ by the $k$-tightness of $\varphi$. However, since $\psi$ is existentially closed and understanding $x$ as a~parameter makes the $k$-tightness axiom into an existential sentence, we infer that a~desired $y$ has to exist already in $F$. Thus $\psi$ is $k$-tight because $x$ was arbitrary.
\end{proof}

So assuming Conjecture~A, tight embeddings are abundant in a sense. Furthermore, since Conjecture~A is not needed for the existence of existentially closed models of $T_n^1$, it might be tempting to try to prove directly that existentially closed models of $T_n^1$ are tight. Or that they are not tight and Conjecture~A thus does not hold. The author, however, does not consider this approach to be very promising. After all, the point of existentially closed models is that they bring into existence something that has a potential to exist in a larger model. It is highly unlikely that we gain some intrinsic information about existentially closed models of $T_n^1$ that would help us (dis)prove Conjecture~A. Usually it is the other way around: knowing something about possible model extensions, we gain information about existentially closed models.

\bigskip

\noindent\textbf{Acknowledgement.} I would like to thank the anonymous referees for careful reading and apt suggestions which helped to improve the quality of the paper.


\bigskip

\end{document}